
\documentclass{article}

\usepackage{times}
\usepackage{mathptmx}
\usepackage{amsmath,amssymb,amsthm} 
\usepackage{bbm}           

\newcommand{\mathscr}{\mathcal}
\renewcommand{\rho}{\varrho}

\DeclareMathOperator*{\argmin}{arg\,min}
\DeclareMathOperator{\prox}{prox}

\newcommand{\Rb}{\mathbbm{R}}

\newcommand{\widebar}{\bar}

\bibliographystyle{alpha}
\pagenumbering{arabic}
\pagestyle{plain}

\newtheorem{theorem}{Theorem}
\newtheorem{lemma}[theorem]{Lemma}
\newtheorem{assumption}[theorem]{Assumption}
\newtheorem{corollary}[theorem]{Corollary}

\begin{document}

\title{Rate of Convergence of the Bundle Method}


\author{
 Yu Du\footnote{Department of Management Science and Information Systems, 100 Rockefeller Road,  Rutgers University, Piscataway, NJ 08854, USA;
               Email: duyu@rutgers.edu}
\and
 Andrzej Ruszczy\'nski\footnote{Department of Management Science and Information Systems, 100 Rockefeller Road,  Rutgers University, Piscataway, NJ 08854, USA;
               Email: rusz@rutgers.edu}
}

\date{September 3, 2016}

\maketitle

\begin{abstract}
We prove that the bundle method for nonsmooth optimization achieves
solution accuracy $\varepsilon$ in at most $\mathcal{O}\big(\ln(1/\varepsilon)/\varepsilon\big)$ iterations, if the function is strongly convex. The result is true for the versions of the method with
multiple cuts and with cut aggregation.\\
\end{abstract}


\section{Introduction}

The objective of this note is to provide a worst-case bound on the rate of convergence of the bundle method for solving
convex optimization problems of the following form:
\begin{equation}
\label{problem}
\min_{x\in \Rb^n} F(x),
\end{equation}
where $F:\Rb^n\to \Rb$ is a convex function. The only additional assumption about the function needed to bound the rate is  strong convexity of the function about the minimum point.

The bundle methods were developed
in \cite{lem:iiasa,mif:82}. First rigorous convergence analysis and versions
with cut aggregation were provided in \cite{kiwiel1983aggregate,Kiwiel-book}.
For a comprehensive treatment of bundle and trust region methods, see \cite{bogilesa:on,HUL-book}.
Although the bundle method is a method of choice for nonsmooth optimization, no general rate of convergence results are available.
This is due to the complicated structure of the method, in which successive iterations carry out different operations,
depending on the outcome of a sufficient descent test.

Some results on the rate of convergence are available for the related bundle level method \cite{LNN95}, which achieves  $O(1/\varepsilon^2)$ iteration complexity for general nonsmooth convex programming problems.  Similar results have been obtained for modified versions in \cite{KKC95} and \cite{LG15}.

Our contribution is to prove at most $\mathcal{O}\big(\ln(1/\varepsilon)/\varepsilon\big)$ iteration complexity of the
classical bundle method, under the condition of strong convexity about the minimum point. This is achieved by bounding the
numbers of null steps between successive descent steps, and integrating these bounds across the entire run of the method. The result holds true for two versions of the method: with
multiple cuts and with cut aggregation.

In section \ref{s:method}, we present both versions of the bundle method and recall its convergence properties.
Section \ref{s:aux} contains several auxiliary results. A  worst-case bound
 on the convergence rate of the method is
 derived in section~\ref{s:rate}.

We use $\langle\cdot,\cdot\rangle$ and $\|\cdot\|$ to denote the usual scalar product and the Euclidean norm in a finite
dimensional space.

\section{The Bundle Method}
\label{s:method}

The bundle method is related to the fundamental idea of the \emph{proximal point method}, which uses
the \emph{Moreau--Yosida regularization} of $F(\cdot)$,
\begin{equation}
\label{MY-regularization}
F_\rho(y) = \min_x \Big\{ F(x) +  \frac{\rho}{2}\big\|x-y\big\|^2 \Big\},\quad \rho>0,
\end{equation}
to construct the \emph{proximal step} for \eqref{problem},
\begin{equation}
\label{prox}
\prox_F(y) = \argmin_x \Big\{ F(x) +  \frac{\rho}{2}\big\|x-y\big\|^2 \Big\}.
\end{equation}
The proximal point method carries out the
iteration $x^{k+1} = \prox_F(x^k)$, $k=1,2,\dots$ and is known to converge to a minimum of $F(\cdot)$, if a minimum exists \cite{rockafellar1976monotone}.

The main idea of the bundle method is to replace problem \eqref{problem} with a sequence of approximate problems
of the following form:
\begin{equation}
\label{subproblem}
\min_x \widetilde{F}^{k}(x) + \frac{\rho}{2}\big\|x-x^k\big\|^2.
\end{equation}
Here $k=1,2,\dots$ is the iteration number, $x^k$ is the current best approximation to the solution, and
$\widetilde{F}^{k}(\cdot)$ is a piecewise linear convex lower approximation of the function $F(\cdot)$. Two versions of the method
differ in the way this approximation is constructed.

\subsection{The Version with Multiple Cuts}

In the version with multiple cuts, the approximations $\widetilde{F}^k(\cdot)$ are constructed as follows:
\[
\widetilde{F}^k(x) = \max_{j\in J_k} \big\{ F(z^j) + \langle g^j, x - z^j\rangle \big\},
\]
with some previously generated points $z^j$ and subgradients
$g^j\in \partial F(z^j)$, $j\in J_k$, where $J_k\subseteq\{1,\dots,k\}$. The points $z^j$ are
solutions of problems \eqref{subproblem} at earlier iterations of the method.

Thus, problem \eqref{subproblem} differs from
\eqref{MY-regularization} by the fact that the  function $F(\cdot)$ is replaced by
a cutting plane approximation. The other difference between the bundle method and the proximal point method
is that the solution $z^{k+1}$ of problem \eqref{subproblem} is subject to a sufficient improvement test, which
decides whether the next proximal center $x^{k+1}$ should be set to $z^{k+1}$ or remain unchanged.\\

\noindent
\textbf{Bundle Method with Multiple Cuts}\vspace{1ex}\\
\textbf{Step 0:} Set $k=1$, $J_1=\{1\}$,
$z^1=x^1$, and select $g^1\in \partial F(z^1)$.  Choose parameter $\beta \in (0,1)$, and a stopping precision $\varepsilon > 0$. \vspace{1ex}\\
\textbf{Step 1:} Find the solution $z^{k+1}$ of subproblem \eqref{subproblem}.\\
\textbf{Step 2:}  If
\begin{equation}\label{eqn: n_v}
F(x^k) - \widetilde{F}^k(z^{k+1}) \le \varepsilon,
\end{equation}
then stop; otherwise, continue.\vspace{1ex}\\
\textbf{Step 3:} If
\begin{equation}\label{eqn: n_new_rule1}
F(z^{k+1}) \leq F(x^k) - \beta\big(F(x^k) - \widetilde{F}^k(z^{k+1})\big),
\end{equation}
then set $x^{k+1} = z^{k+1}$ (\emph{descent step}); otherwise set $x^{k+1}=x^k$ (\emph{null step}).\vspace{1ex}\\
\textbf{Step 4:} Select a set $J_{k+1}$ so that
\[
J_k \cup\{k+1\} \supseteq J_{k+1} \supseteq \{k+1\}\cup \big\{j\in J_k:
F(z^j) + \langle g^j, z^{k+1} - z^j\rangle = \widetilde{F}^k(z^{k+1})\big\}.
\]
Increase $k$ by 1 and go to Step 1.\vspace{1ex}

\subsection{The Version with Cut Aggregation}

In the version with cut aggregation,
as described in \cite{kiwiel1983aggregate} and \cite[sec. 7.4.4]{ruszczynski2006nonlinear},
the approximations $\widetilde{F}^k(\cdot)$ have only two pieces:
\[
\widetilde{F}^k(x) = \max\big\{ \widebar{F}^k(x), F(z^k) + \langle g^k, x - z^k\rangle \big\},
\]
with the last generated point $z^k$ and the corresponding subgradient
$g^k\in \partial F(z^k)$. The function $\widebar{F}^k(x)$ is a convex combination of affine minorants
 $F(z^j) + \langle g^j, x - z^j\rangle$, constructed at previously generated points $z^j$ with subgradients
$g^j\in \partial F(z^j)$, where $1\le j <k$. This function is updated at each iteration, as specified in Step 4 of the algorithm below.\\

\noindent
\textbf{Bundle Method with Cut Aggregation}\vspace{1ex}\\
\textbf{Step 0:} Set $k=1$, 
$z^1=x^1$, $\bar{F}^1(\cdot)\equiv\ -\infty$, and select $g^1\in \partial F(z^1)$.  Choose parameter $\beta \in (0,1)$, and a stopping precision $\varepsilon > 0$. \vspace{1ex}\\
\textbf{Step 1:} Find the solution $z^{k+1}$ of subproblem \eqref{subproblem}.\\
\textbf{Step 2:}  If
\begin{equation}\notag
F(x^k) - \widetilde{F}^k(z^{k+1}) \le \varepsilon,
\end{equation}
then stop; otherwise, continue.\vspace{1ex}\\
\textbf{Step 3:} If
\begin{equation}\notag
F(z^{k+1}) \leq F(x^k) - \beta\big(F(x^k) - \widetilde{F}^k(z^{k+1})\big),
\end{equation}
then set $x^{k+1} = z^{k+1}$ (\emph{descent step}); otherwise set $x^{k+1}=x^k$ (\emph{null step}).\vspace{1ex}\\
\textbf{Step 4:} Define
\begin{equation}
\label{Fk-update}
\widebar{F}^{k+1}(x) = \theta_k\widebar{F}^k(x) +(1-\theta_k)\big[ F(z^k) + \langle g^k, x - z^k\rangle\big],
\end{equation}
where $\theta_k\in [0,1]$ is such that the gradient of $\widebar{F}^{k+1}(\cdot)$ is equal to the subgradient of $\widetilde{F}^k(\cdot)$
at $z^{k+1}$ that satisfies the optimality conditions for problem \eqref{subproblem}.
Increase $k$ by 1 and go to Step 1.\vspace{1ex}


\subsection{Convergence}
Convergence of the bundle method (in both versions) for convex functions is well-known.
\begin{theorem}
\label{c:summary}
Suppose ${\rm Argmin}\,F\ne \emptyset$ and $\varepsilon=0$. Then a point $x^*\in {\rm Argmin}\,F$ exists, such that:
\[
\lim_{k\to\infty} x^k = \lim_{k\to\infty} z^k = x^*.
\]
\end{theorem}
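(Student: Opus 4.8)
The plan is to follow the classical convergence argument for the bundle method, organising it around a dichotomy on the infinite sequence of iterations. Throughout, write $v_k:=F(x^k)-\widetilde{F}^k(z^{k+1})$ for the predicted decrease tested in Step~2. I will use repeatedly two elementary consequences of the optimality conditions for \eqref{subproblem}: the aggregate subgradient $\hat g^k:=\rho(x^k-z^{k+1})$ belongs to $\partial\widetilde{F}^k(z^{k+1})$, and, since the optimal value of \eqref{subproblem} equals $\widetilde{F}^k(z^{k+1})+\tfrac{\rho}{2}\|z^{k+1}-x^k\|^2$ and is at most $\widetilde{F}^k(x^k)\le F(x^k)$, we have $v_k\ge\tfrac{\rho}{2}\|z^{k+1}-x^k\|^2=\tfrac{1}{2\rho}\|\hat g^k\|^2\ge 0$; thus $v_k\to 0$ forces $z^{k+1}-x^k\to 0$ and $\hat g^k\to 0$. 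I will also use that every cut carried in the models is an affine minorant of $F$, so $\widetilde{F}^k\le F$ --- in the aggregation version this is where the choice of $\theta_k$ in \eqref{Fk-update} enters, keeping $\widebar{F}^{k+1}$ a convex combination of such minorants --- and that the aggregate linearisation $x\mapsto\widetilde{F}^k(z^{k+1})+\langle\hat g^k,x-z^{k+1}\rangle$ is dominated by $\widetilde{F}^{k+1}$, because its slope $\hat g^k$ and its value at $z^{k+1}$ are inherited from cuts that Step~4, respectively \eqref{Fk-update}, is required to keep. Since $\varepsilon=0$, if the method ever stops then $v_k=0$, hence $z^{k+1}=x^k$ and $0\in\partial\widetilde{F}^k(x^k)$, so $F(x^k)=\widetilde{F}^k(x^k)=\min\widetilde{F}^k\le\min F$ and $x^k\in\mathrm{Argmin}\,F$; then $x^*:=x^k=z^k$ works. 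Assume from now on that the method runs forever.

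\emph{Case 1: infinitely many descent steps.} Because $F(x^{k+1})\le F(x^k)-\beta v_k$ on descent iterations, $F(x^{k+1})=F(x^k)$ on null iterations, and $F\ge\min F$, the sequence $\{F(x^k)\}$ is nonincreasing and convergent and $\sum_{k\in D}v_k<\infty$, where $D$ is the set of descent iterations; in particular $v_k\to 0$ along $D$. Expanding $\|z^{k+1}-x^*\|^2=\|x^k-x^*-\tfrac{1}{\rho}\hat g^k\|^2$ for $x^*\in\mathrm{Argmin}\,F$ and using $\hat g^k\in\partial\widetilde{F}^k(z^{k+1})$ together with $\widetilde{F}^k(x^*)\le\min F\le F(x^k)$, one obtains $\|x^{k+1}-x^*\|^2\le\|x^k-x^*\|^2+\tfrac{2}{\rho}v_k$ on descent iterations and equality on null iterations; hence $\{x^k\}$ is bounded and $\{\|x^k-x^*\|\}$ converges for every $x^*\in\mathrm{Argmin}\,F$. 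Choose $k_i\in D$ with $x^{k_i}\to\bar x$; then $v_{k_i}\to 0$, so $\hat g^{k_i}\to 0$ and $z^{k_i+1}\to\bar x$, and letting $i\to\infty$ in $F(x)\ge\widetilde{F}^{k_i}(x)\ge F(x^{k_i})-v_{k_i}+\langle\hat g^{k_i},x-z^{k_i+1}\rangle$ gives $F(x)\ge F(\bar x)$ for all $x$, i.e.\ $\bar x\in\mathrm{Argmin}\,F$. Taking $x^*=\bar x$, the convergent sequence $\|x^k-\bar x\|$ has a subsequence tending to $0$, so $x^k\to\bar x$; on descent iterations $z^{k+1}=x^{k+1}\to\bar x$, and on the remaining iterations $z^{k+1}\to\bar x$ follows from $\|z^{k+1}-x^k\|^2\le\tfrac{2}{\rho}v_k$ once $v_k\to 0$ is established there as well --- which is done by the null-step analysis of Case~2 applied on each maximal block of consecutive iterations with a common proximal centre.

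\emph{Case 2: only finitely many descent steps.} After the last one the proximal centre is frozen at some $\hat x$ and every subsequent step is null; this is the heart of the argument and the step I expect to be the main obstacle. Consider the subproblem optimal values $w_k:=\widetilde{F}^k(z^{k+1})+\tfrac{\rho}{2}\|z^{k+1}-\hat x\|^2\le F(\hat x)$. The domination of $\widetilde{F}^{k+1}$ by the aggregate linearisation at $z^{k+1}$, plus the fact that that linearisation together with the quadratic is minimised exactly at $z^{k+1}$ with value $w_k$, yields $w_{k+1}\ge w_k+\tfrac{\rho}{2}\|z^{k+2}-z^{k+1}\|^2$; hence $\{w_k\}$ increases to a finite limit and $\|z^{k+2}-z^{k+1}\|\to 0$. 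Inserting the fresh cut $F(z^{k+1})+\langle g^{k+1},\,\cdot\,-z^{k+1}\rangle\le\widetilde{F}^{k+1}$ into $w_{k+1}\le F(\hat x)$ gives $F(z^{k+1})-F(\hat x)\le\|g^{k+1}\|\,\|z^{k+2}-z^{k+1}\|\to 0$, and combining this with the failed descent test $F(z^{k+1})>F(\hat x)-\beta v_k$ and the identity $v_k=F(\hat x)-w_k+\tfrac{\rho}{2}\|z^{k+1}-\hat x\|^2$ forces $v_k\to 0$ --- the delicate compactness step, in which local boundedness of $\partial F$ is also invoked to keep the relevant quantities bounded. Then $\hat g^k\to 0$, $z^{k+1}\to\hat x$, and the same limiting-subgradient passage as before (now with $F(x^k)\equiv F(\hat x)$) gives $F(x)\ge F(\hat x)$ for all $x$, so $\hat x\in\mathrm{Argmin}\,F$; since moreover $\|z^{k+1}-\hat x\|=\tfrac{1}{\rho}\|\hat g^k\|\to 0$, also $z^k\to\hat x$, and $x^*:=\hat x$ works.

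In both cases we have produced $x^*\in\mathrm{Argmin}\,F$ with $x^k\to x^*$ and $z^k\to x^*$, which is the assertion. The genuinely delicate point --- and the one I would spell out most carefully --- is the null-step analysis of Case~2 (and its reuse to close the null gaps in Case~1): that the monotonically improving piecewise-linear models drive the predicted decrease $v_k$ to zero along any run of consecutive null steps. It is also the only place where the two versions of the method differ in the details, yet they are handled uniformly in substance, since the sole role of the cut-selection rule of Step~4 and of the update \eqref{Fk-update} is to guarantee that the aggregate linearisation at $z^{k+1}$ survives into $\widetilde{F}^{k+1}$, which is exactly what makes $\{w_k\}$ nondecreasing. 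Everything else is bookkeeping with the optimality conditions of \eqref{subproblem} and a quasi-Fej\'er/Opial argument.
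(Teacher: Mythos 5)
The paper does not actually prove Theorem~\ref{c:summary}; it delegates the proof to the cited references (Kiwiel, Hiriart-Urruty--Lemar\'echal, Ruszczy\'nski, Thm.~7.16). What you have written is in substance a reconstruction of exactly that standard argument: the dichotomy between infinitely many and finitely many descent steps, a quasi-Fej\'er/Opial argument in the first case, and the monotone increase of the subproblem optimal values (the paper's $\eta^k$, your $w_k$) driving $v_k\to 0$ in the second. The main line is sound: in Case~2 the computation closes once you pass to a convergent subsequence of the bounded sequence $\{z^{k+1}\}$ and use continuity of $F$ to obtain $\limsup v_k \le F(\hat x)-F(\bar z)\le \beta\,\limsup v_k$, hence $v_k\to 0$. (A minor point worth making explicit: the boundedness of $\{z^{k+1}\}$, which you need before invoking local boundedness of $\partial F$ to control $\|g^{k+1}\|$, follows from $\tfrac{\rho}{2}\|z^{k+1}-\hat x\|^2\le F(\hat x)-\eta^k\le F(\hat x)-\eta^{k_0}$.)

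One step is genuinely weaker than you acknowledge. In Case~1 you need $v_k\to 0$ along the \emph{null} iterations in order to conclude $z^{k+1}\to x^*$ there, and you propose to obtain it by ``applying the Case~2 analysis on each maximal block of consecutive null steps.'' That does not work as stated: the Case~2 conclusion is asymptotic within a single \emph{infinite} block and gives no quantitative control over a finite block whose length you do not control, so it cannot by itself show that $v_k$ is uniformly small late in the run. The repair is precisely the machinery the paper builds for its rate analysis: within a block with fixed centre $x^{(\ell)}$ the gap $F(x^{(\ell)})-\eta^k$ is nonincreasing (Lemma~\ref{lem: increment_bound}), it dominates $\tfrac12 v_k$ (Lemma~\ref{l:eps-half}), and its value at the first iteration of the block is at most $\tfrac{3}{2\beta}\bigl(F(x^{(\ell-1)})-F(x^{(\ell)})\bigr)$ (Lemma~\ref{lem: null_step_start_bound}), a quantity that tends to zero because $\{F(x^{(\ell)})\}$ converges. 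With that substitution your Case~1 is complete and the whole proof goes through.
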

\begin{proof} The proof of this result (in slightly different versions) can be found in numerous references, such as  \cite[Thm. 4.9]{Kiwiel-book},  \cite[Thm. XV.3.2.4]{HUL-book}, or \cite[Thm. 7.16]{ruszczynski2006nonlinear}.
\end{proof}

\section{Auxiliary results}
\label{s:aux}

In this section, we collect several auxiliary results on the properties of the bundle method in the general case. They are either refined versions or direct quotations of results presented
in \cite[sec. 7.4]{ruszczynski2006nonlinear}. We consider both versions of the method in parallel, with the corresponding versions of the functions
$\widetilde{F}^k(\cdot)$. All the results hold true for both versions,
because the analysis of the method with multiple cuts uses the version with cut aggregation anyway; in the proofs we explain the minor differences between the methods.

We first prove that if a null step occurs at iteration $k$, then the optimal objective function values of consecutive subproblems are increasing, and the gap is bounded below by a quantity dependent on
\begin{equation}
\label{vk-def}
v_k = F(x^k) - \widetilde{F}^k(z^{k+1}).
\end{equation}
We define the optimal objective function values of subproblem \eqref{subproblem} at iteration $k$ as:
\begin{equation}
\eta^k = \widetilde{F}^{k}(z^{k+1})+ \frac{\rho}{2}\big\|z^{k+1}-x^k\big\|^2.
\label{eta_k}
\end{equation}
Note that $x^{k+1} = x^k$ at a null step.

Since the point $z^{k+1}$ is the optimal solution of  \eqref{subproblem} at iteration $k$,
the vector
\begin{equation}
\label{skp1}
s^{k+1} = - \rho \big(z^{k+1} - x^k \big).
\end{equation}
is the subgradient  of $\widetilde{F}^k(\cdot)$ at $z^{k+1}$
that features in the optimality conditions.
%
Consequently,
 the point $z^{k+1}$ is also the unique minimum of
the problem
\begin{equation}
\label{eta-sk}
\min_{x} \Big\{ \widetilde{F}^k(z^{k+1}) + \langle s^{k+1},x-z^{k+1} \rangle + \frac{\rho}{2}\big\|x - x^k\big\|^2 \Big\},
\end{equation}
and the values of \eqref{eta_k} and \eqref{eta-sk} coincide. In the method with cut aggregation, by the
definition of $\theta_k$ in \eqref{Fk-update} and by \eqref{skp1}, we have
\[
\widebar{F}^{k+1}(x)=\widetilde{F}^k(z^{k+1}) + \langle s^{k+1},x-z^{k+1} \rangle.
\]
The addition of a new cut at
$z^{k+1}$ and possible deletion of inactive cuts (in the method without cut aggregation), creates a function $\widetilde{F}^{k+1}(\cdot)$, which
satisfies the inequality
\begin{equation}
\label{aggregate-lower}
\widetilde{F}^{k+1}(x) \ge \max\big(\widetilde{F}^k(z^{k+1}) + \langle s^{k+1},x-z^{k+1} \rangle, F(z^{k+1}) + \langle g^{k+1}, x - z^{k+1} \rangle \big).
\end{equation}
In the method with cut aggregation, exact equality in \eqref{aggregate-lower} is true, but we use the inequality ``$\ge$'' in further considerations.
Since the test for a descent step is not satisfied, we have
\[
\widetilde{F}^{k+1}(z^{k+1})=F(z^{k+1})> \widetilde{F}^{k}(z^{k+1}).
 \]
 The solution $z^{k+1}$ of problem \eqref{eta-sk} is unique, due to the strong convexity of the function being minimized there. Therefore, the optimal value of \eqref{eta-sk} must increase after replacing $\widetilde{F}^k(z^{k+1}) + \langle s^{k+1},x-z^{k+1} \rangle$
 with the right hand side of \eqref{aggregate-lower}. The optimal value $\eta^{k+1}$ of \eqref{subproblem}
 at iteration $k+1$ is at least as large, due to \eqref{aggregate-lower}.

The key issue is to bound the actual increment from $\eta^k$ to $\eta^{k+1}$ from below.
\begin{lemma}\label{lem: increment_bound}
If a null step is made at iteration $k$, then
\begin{equation}
\label{eta-increase}
\eta^{k+1} \ge \eta^k + \frac{1-\beta}{2}\bar{\mu}_kv_k,
\end{equation}
where
\begin{equation}
\label{muk-def}
\bar{\mu}_k =
\min\bigg\{1, \frac{(1-\beta)\rho v_k}{\|s^{k+1} - g^{k+1}\|^2}\bigg\}.
\end{equation}
\end{lemma}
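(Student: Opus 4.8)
The plan is to bound $\eta^{k+1}$ from below through a one-parameter family of quadratic relaxations of subproblem \eqref{subproblem} at iteration $k+1$. Since $x^{k+1}=x^k$ at a null step, $\eta^{k+1}=\min_x\big\{\widetilde{F}^{k+1}(x)+\frac{\rho}{2}\|x-x^k\|^2\big\}$. For every $\mu\in[0,1]$ the maximum on the right-hand side of \eqref{aggregate-lower} dominates the affine function
\[
\ell_\mu(x):=(1-\mu)\big[\widetilde{F}^k(z^{k+1})+\langle s^{k+1},x-z^{k+1}\rangle\big]+\mu\big[F(z^{k+1})+\langle g^{k+1},x-z^{k+1}\rangle\big],
\]
so that $\widetilde{F}^{k+1}(x)\ge\ell_\mu(x)$ and hence $\eta^{k+1}\ge\min_x\big\{\ell_\mu(x)+\frac{\rho}{2}\|x-x^k\|^2\big\}$ for all $\mu\in[0,1]$.

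The next step is to evaluate this minimum in closed form. The minimand is a strongly convex quadratic with Hessian $\rho I$; for $\mu=0$ it is precisely the objective of \eqref{eta-sk}, whose minimum over $x$ equals $\eta^k$ and is attained at $z^{k+1}$, and by \eqref{skp1} we have $x^k-z^{k+1}=s^{k+1}/\rho$. Writing $\ell_\mu(x)+\frac{\rho}{2}\|x-x^k\|^2=\langle a_\mu,x\rangle+\frac{\rho}{2}\|x-x^k\|^2+\text{const}$ with $a_\mu=s^{k+1}+\mu(g^{k+1}-s^{k+1})$ and applying the identity $\min_x\{\langle a,x\rangle+\frac{\rho}{2}\|x-x^k\|^2+c\}=c+\langle a,x^k\rangle-\frac{1}{2\rho}\|a\|^2$, the cross terms proportional to $\langle s^{k+1},g^{k+1}-s^{k+1}\rangle$ cancel and, using \eqref{eta_k} together with $\|s^{k+1}\|^2=\rho^2\|z^{k+1}-x^k\|^2$, one is left with
\[
\min_x\Big\{\ell_\mu(x)+\tfrac{\rho}{2}\|x-x^k\|^2\Big\}=\eta^k+\mu\big(F(z^{k+1})-\widetilde{F}^k(z^{k+1})\big)-\frac{\mu^2}{2\rho}\big\|s^{k+1}-g^{k+1}\big\|^2.
\]
(For $\mu=0$ this returns $\eta^k$; for $\mu=1$ it is the minimum of the single aggregate cut plus the prox term.)

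It remains to use the null step and optimize over $\mu$. Because iteration $k$ is a null step, the sufficient-descent test in Step 3 fails, so $F(z^{k+1})>F(x^k)-\beta v_k$; subtracting $\widetilde{F}^k(z^{k+1})$ and using \eqref{vk-def} gives $F(z^{k+1})-\widetilde{F}^k(z^{k+1})\ge(1-\beta)v_k$. Since $\mu\ge0$, the displayed identity yields, for all $\mu\in[0,1]$,
\[
\eta^{k+1}\ge\eta^k+\mu(1-\beta)v_k-\frac{\mu^2}{2\rho}\big\|s^{k+1}-g^{k+1}\big\|^2,
\]
a concave quadratic in $\mu$ whose unconstrained maximizer is $\mu^\star=(1-\beta)\rho v_k/\|s^{k+1}-g^{k+1}\|^2$. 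Taking $\mu=\bar\mu_k=\min\{1,\mu^\star\}$ from \eqref{muk-def}: if $\bar\mu_k=\mu^\star\le1$ the right-hand side equals $\eta^k+\frac{1-\beta}{2}\mu^\star v_k$; if $\bar\mu_k=1<\mu^\star$ then $\|s^{k+1}-g^{k+1}\|^2<(1-\beta)\rho v_k$, so the right-hand side is $\eta^k+(1-\beta)v_k-\frac{1}{2\rho}\|s^{k+1}-g^{k+1}\|^2\ge\eta^k+\frac{1-\beta}{2}v_k$. In both cases this is \eqref{eta-increase}.

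The main obstacle is the closed-form minimization in the second step: one must track the substitution $x^k-z^{k+1}=s^{k+1}/\rho$ accurately and observe the cancellation of the $\langle s^{k+1},g^{k+1}-s^{k+1}\rangle$ terms, which is what produces the clean scalar quadratic in $\mu$. The remaining steps — exploiting the failed descent test and maximizing that scalar quadratic subject to the truncation at $\mu=1$ — are routine.
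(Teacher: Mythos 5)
Your proof is correct and follows essentially the same route as the paper: relax the max in \eqref{aggregate-lower} by a convex combination indexed by $\mu$, reduce to the scalar concave quadratic $\mu\mapsto \eta^k+\mu\big(F(z^{k+1})-\widetilde{F}^k(z^{k+1})\big)-\frac{\mu^2}{2\rho}\|s^{k+1}-g^{k+1}\|^2$, and choose the truncated maximizer after invoking the failed descent test. The only cosmetic differences are that you evaluate the parametric minimum in closed form where the paper differentiates and integrates $\hat{Q}_k$, and that you insert the inequality $F(z^{k+1})-\widetilde{F}^k(z^{k+1})>(1-\beta)v_k$ before optimizing over $\mu$ rather than after.
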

\begin{proof}
Using \eqref{aggregate-lower}, we can bound the optimal value of the subproblem \eqref{subproblem}
 at iteration $k+1$ as follows:
\begin{equation}\label{eqn: f2_relax}
\begin{aligned}
\eta^{k+1}
&\ge \min_{x}\Big\{\max \Big(  \widetilde{F}^k(z^{k+1}) + \langle s^{k+1},x-z^{k+1} \rangle,\\
&\qquad F(z^{k+1}) + \langle g^{k+1}, x - z^{k+1}\rangle\Big) +   \frac{\rho}{2}\big\|x-x^k\big\|^2\Big\}\\
&\ge \min_{x} \Big\{
(1-\mu) \Big( \widetilde{F}^k(z^{k+1}) + \langle s^{k+1},x-z^{k+1} \rangle\Big) \\
&{\quad} + \mu\Big( F(z^{k+1}) + \langle g^{k+1}, x - z^{k+1} \rangle \Big) +  \frac{\rho}{2}\big\|x-x^k\big\|^2\Big\},
\end{aligned}
\end{equation}
with any value of the parameter $\mu\in [0,1]$.
 Define
 \begin{multline}
 \label{Qk}
 \hat{Q}_k(\mu) = \min_{x} \Big\{
(1-\mu) \Big( \widetilde{F}^k(z^{k+1}) + \langle s^{k+1},x-z^{k+1} \rangle\Big) \\
{\quad} + \mu\Big( F(z^{k+1}) + \langle g^{k+1}, x - z^{k+1} \rangle \Big) +  \frac{\rho}{2}\big\|x-x^k\big\|^2\Big\}.
 \end{multline}
  Due to \eqref{eta-sk},  $\hat{Q}_k(0)=\eta^k$. It follows from \eqref{eqn: f2_relax} that
the difference between $\eta^{k+1}$ and $\eta^k$ can be bounded from below by the increase in the optimal value $\hat{Q}_k(\mu)$, when $\mu$ moves away from zero. That is,
\[
\eta^{k+1} - \eta^k \geq \max_{\mu\in[0,1]}\hat{Q}_k(\mu) - \hat{Q}_k(0).
\]
By direct calculation and with a view to \eqref{skp1}, the minimizer on the right hand side of \eqref{Qk} is
\[
\hat{x}(\mu)= z^{k+1}+ \frac{\mu}{\rho} \big(s^{k+1}-g^{k+1}\big).
 \]
To obtain the derivative of $\hat{Q}_k(\cdot)$, we calculate the partial derivative of
the right-hand side of \eqref{Qk} with respect to $\mu$ and then substitute $x=\hat{x}(\mu)$. We obtain
\begin{align*}
 \hat{Q}'_k(\mu) &= F(z^{k+1}) - \widetilde{F}^k(z^{k+1})  + \langle g^{k+1} - s^{k+1}, \hat{x}(\mu)-z^{k+1} \rangle \\
 &=
  F(z^{k+1}) - \widetilde{F}^k(z^{k+1}) -\frac{\mu}{\rho} \big\|s^{k+1} - g^{k+1}\big\|^2 .
\end{align*}
Thus, for any value of $\mu_k\in [0,1]$,
\begin{align*}
\eta^{k+1} - \eta^k &\geq \hat{Q}_k({\mu}_k)-\hat{Q}_k(0) = \int^{{\mu}_k}_0 \hat{Q}'_k(\mu)\;d\mu \\
&=  {\mu}_k\left(F(z^{k+1}) - \widetilde{F}^k(z^{k+1}) - \frac{{\mu}_k}{2\rho}\big\|s^{k+1} - g^{k+1}\big\|^2\right).
\end{align*}
Define
\[
{\mu}_k = \min\bigg\{1, \frac{\rho\big(F(z^{k+1}) - \widetilde{F}^k(z^{k+1})\big)}{\|s^{k+1}- g^{k+1}\|^2}\bigg\}.
\]
 Clearly, $ {\mu}_k\in[0,1]$.
Substitution into the last displayed relation implies the inequality
\begin{equation}\label{eqn: gap}
\eta^{k+1} - \eta^k  \ge \frac{{\mu}_k}{2} \big( F(z^{k+1}) - \widetilde{F}^k(z^{k+1}) \big).
\end{equation}
If a null step occurs at iteration $k$, then the update step rule (\ref{eqn: n_new_rule1}) is violated.
Thus, $F(z^{k+1}) - \widetilde{F}^k(z^{k+1}) > (1 - \beta)v_k$.
Using this in (\ref{eqn: gap}), we obtain
\[
\eta^{k+1} - \eta^k  \ge \frac{1 - \beta}{2}{\mu}_k v_k.
\]
Since $\mu_k \ge \bar{\mu}_k$, the
postulated bound \eqref{eta-increase} follows. 
\end{proof}


We recall a useful bound of the changes from $\eta^k$ to $\eta^{k+1}$ at descent steps.

\begin{lemma}
\label{l:eta-in-descent} If a descent step occurs at iteration $k$, then
\begin{equation}
\label{eta-descent}
\eta^{k+1} - \eta^{k}\ge - \rho \big\|x^{k+1}-x^k\big\|^2  \ge   \frac{1}{\beta} \big( F(x^{k+1}) - F(x^k)\big).
\end{equation}
\end{lemma}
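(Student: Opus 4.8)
The plan is to prove the two inequalities separately. For the first one, $\eta^{k+1}-\eta^k \ge -\rho\|x^{k+1}-x^k\|^2$, I would bound $\eta^{k+1}$ from below using the lower-approximation property \eqref{aggregate-lower}. Since $\widetilde{F}^{k+1}(x)\ge \widetilde{F}^k(z^{k+1}) + \langle s^{k+1}, x-z^{k+1}\rangle$, the optimal value $\eta^{k+1}$ of subproblem \eqref{subproblem} at iteration $k+1$ dominates the minimum over $x$ of $\widetilde{F}^k(z^{k+1}) + \langle s^{k+1}, x-z^{k+1}\rangle + \frac{\rho}{2}\|x-x^{k+1}\|^2$. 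At a descent step $x^{k+1}=z^{k+1}$, so this minimum is a routine computation: the minimizer is $z^{k+1} - s^{k+1}/\rho$ and the minimum value equals $\widetilde{F}^k(z^{k+1}) - \|s^{k+1}\|^2/(2\rho)$. Substituting $s^{k+1} = -\rho(z^{k+1}-x^k)$ from \eqref{skp1} turns the last term into $\frac{\rho}{2}\|z^{k+1}-x^k\|^2$, and recalling from \eqref{eta_k} that $\eta^k = \widetilde{F}^k(z^{k+1}) + \frac{\rho}{2}\|z^{k+1}-x^k\|^2$, we obtain $\eta^{k+1}\ge \eta^k - \rho\|z^{k+1}-x^k\|^2 = \eta^k - \rho\|x^{k+1}-x^k\|^2$.

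For the second inequality, $-\rho\|x^{k+1}-x^k\|^2 \ge \frac{1}{\beta}\big(F(x^{k+1})-F(x^k)\big)$, I would invoke the descent test \eqref{eqn: n_new_rule1}, which at a descent step gives $F(x^{k+1}) = F(z^{k+1}) \le F(x^k) - \beta v_k$, hence $\frac{1}{\beta}\big(F(x^{k+1})-F(x^k)\big) \le -v_k$. It then suffices to show $\rho\|x^{k+1}-x^k\|^2 \le v_k$. This follows from the $\rho$-strong convexity of the objective of subproblem \eqref{subproblem}: comparing its value at the minimizer $z^{k+1}$ with its value at the feasible point $x^k$ yields $\widetilde{F}^k(x^k) \ge \widetilde{F}^k(z^{k+1}) + \rho\|z^{k+1}-x^k\|^2$, and since $\widetilde{F}^k(\cdot)\le F(\cdot)$, this gives $\rho\|z^{k+1}-x^k\|^2 \le F(x^k) - \widetilde{F}^k(z^{k+1}) = v_k$. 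Chaining these inequalities with $\|z^{k+1}-x^k\|=\|x^{k+1}-x^k\|$ completes the argument.

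The only delicate point is securing the correct constant in $\rho\|x^{k+1}-x^k\|^2 \le v_k$: using merely the optimality inequality for subproblem \eqref{subproblem} (rather than its strong convexity) would yield only the factor $\frac{\rho}{2}$, which is too weak for the stated bound. Exploiting the extra $\frac{\rho}{2}\|x-z^{k+1}\|^2$ gap afforded by strong convexity is what makes the second inequality tight; the rest is direct computation.
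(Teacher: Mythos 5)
Your proof is correct. The paper does not prove this lemma itself---it only cites \cite[(7.68)--(7.69)]{ruszczynski2006nonlinear}---but your argument is complete and is the standard one: bounding $\eta^{k+1}$ from below via the aggregate cut in \eqref{aggregate-lower} with the new center $x^{k+1}=z^{k+1}$, and then converting $\rho\|z^{k+1}-x^k\|^2$ into $v_k$ and invoking the descent test. Both halves check out numerically (the minimum value $\widetilde{F}^k(z^{k+1})-\|s^{k+1}\|^2/(2\rho)$ and the identity $\|s^{k+1}\|^2/(2\rho)=\tfrac{\rho}{2}\|z^{k+1}-x^k\|^2$ are right). Your ``delicate point'' is well spotted: the inequality $\rho\|z^{k+1}-x^k\|^2\le v_k$ with constant $\rho$ rather than $\rho/2$ is exactly what the paper itself establishes inside the proof of Lemma~\ref{l:eps-half} via the condition $\varPhi'(1)\le 0$, which is an equivalent route to your strong-convexity (quadratic growth) argument; either derivation is fine.
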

\begin{proof}
See \cite[(7.68)-(7.69)]{ruszczynski2006nonlinear}.
\end{proof}

The following lemma relates the values of the optimal value of \eqref{subproblem}, $\eta^k$, and the value
$\widetilde{F}(z^{k+1})$ at the solution of \eqref{subproblem}.
\begin{lemma}
\label{l:eps-half}
At every iteration we have the inequality:
\[
F(x^k) - \eta^k \ge \frac{1}{2} \big[ F(x^k) - \widetilde{F}^k(z^{k+1})\big].
\]
\end{lemma}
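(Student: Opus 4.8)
The plan is to exploit the two distinct descriptions of $\eta^k$ that are available: on the one hand, $\eta^k = \widetilde F^k(z^{k+1}) + \frac{\rho}{2}\|z^{k+1}-x^k\|^2$ by its definition \eqref{eta_k}; on the other hand, $\eta^k$ is the optimal value of the linearized problem \eqref{eta-sk}, whose objective at $x = x^k$ equals $\widetilde F^k(z^{k+1}) + \langle s^{k+1}, x^k - z^{k+1}\rangle$. Since $x^k$ is feasible (but not optimal) for \eqref{eta-sk}, and using $s^{k+1} = -\rho(z^{k+1}-x^k)$ from \eqref{skp1}, I would write $\langle s^{k+1}, x^k - z^{k+1}\rangle = \rho\|z^{k+1}-x^k\|^2$, so that the objective value of \eqref{eta-sk} at $x^k$ is exactly $\widetilde F^k(z^{k+1}) + \rho\|z^{k+1}-x^k\|^2 \ge \eta^k$.

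Combining this with the exact value $\eta^k = \widetilde F^k(z^{k+1}) + \frac{\rho}{2}\|z^{k+1}-x^k\|^2$ gives, after subtraction, $\frac{\rho}{2}\|z^{k+1}-x^k\|^2 \le \widetilde F^k(z^{k+1}) + \rho\|z^{k+1}-x^k\|^2 - \eta^k$, i.e. $\eta^k - \widetilde F^k(z^{k+1}) \le \frac{\rho}{2}\|z^{k+1}-x^k\|^2$; but from \eqref{eta_k} this holds with equality anyway, so the useful direction is instead to note $\eta^k \le \widetilde F^k(z^{k+1}) + \rho\|z^{k+1}-x^k\|^2$, hence $\eta^k - \widetilde F^k(z^{k+1}) \le \rho\|z^{k+1}-x^k\|^2 = 2\big(\eta^k - \widetilde F^k(z^{k+1})\big)$ — a tautology. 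The cleaner route: from $\eta^k = \widetilde F^k(z^{k+1}) + \frac{\rho}{2}\|z^{k+1}-x^k\|^2$ we get $F(x^k) - \eta^k = \big(F(x^k) - \widetilde F^k(z^{k+1})\big) - \frac{\rho}{2}\|z^{k+1}-x^k\|^2 = v_k - \frac{\rho}{2}\|z^{k+1}-x^k\|^2$, so it suffices to prove $\frac{\rho}{2}\|z^{k+1}-x^k\|^2 \le \frac12 v_k$, i.e. $\rho\|z^{k+1}-x^k\|^2 \le v_k = F(x^k) - \widetilde F^k(z^{k+1})$.

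To establish $\rho\|z^{k+1}-x^k\|^2 \le F(x^k) - \widetilde F^k(z^{k+1})$, I would use convexity of $\widetilde F^k$ together with the subgradient $s^{k+1}\in\partial\widetilde F^k(z^{k+1})$: by the subgradient inequality, $\widetilde F^k(x^k) \ge \widetilde F^k(z^{k+1}) + \langle s^{k+1}, x^k - z^{k+1}\rangle = \widetilde F^k(z^{k+1}) + \rho\|z^{k+1}-x^k\|^2$. Since $\widetilde F^k$ is a lower approximation built from cuts that include the one at a point giving value $F(x^k)$ — more precisely, since $\widetilde F^k(\cdot) \le F(\cdot)$ and at $x^k$ we actually have $\widetilde F^k(x^k) \le F(x^k)$ — it follows that $F(x^k) \ge \widetilde F^k(x^k) \ge \widetilde F^k(z^{k+1}) + \rho\|z^{k+1}-x^k\|^2$, which is exactly the claimed bound. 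Rearranging yields $F(x^k) - \eta^k = v_k - \frac{\rho}{2}\|z^{k+1}-x^k\|^2 \ge v_k - \frac12 v_k = \frac12 v_k = \frac12\big(F(x^k)-\widetilde F^k(z^{k+1})\big)$, as required.

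The one point needing care — and the main (minor) obstacle — is justifying $\widetilde F^k(x^k) \le F(x^k)$, i.e. that the model never overestimates $F$ at the current proximal center. This holds because each cut $F(z^j)+\langle g^j, \cdot - z^j\rangle$ is an affine minorant of the convex function $F$, so $\widetilde F^k \le F$ pointwise in the multiple-cut version; in the aggregation version $\widetilde F^k$ is a max of an affine minorant at $z^k$ and a convex combination $\bar F^k$ of affine minorants, hence again $\widetilde F^k \le F$. One should also check the edge case $k=1$, where $z^1 = x^1$ and $\widetilde F^1(x^1) = F(x^1)$, so the inequality is tight and the argument goes through unchanged. With $\widetilde F^k(x^k)\le F(x^k)$ in hand, the rest is the elementary rearrangement above.
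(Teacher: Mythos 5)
Your proof is correct and is essentially the paper's argument in different packaging: the paper obtains the key inequality $\rho\|z^{k+1}-x^k\|^2 \le F(x^k) - \widetilde F^k(z^{k+1})$ by differentiating the restriction $\varPhi(\tau)$ of the subproblem objective to the segment $[x^k,z^{k+1}]$ at $\tau=1$, whereas you obtain the identical inequality from the subgradient inequality for $s^{k+1}\in\partial\widetilde F^k(z^{k+1})$ together with $\widetilde F^k(x^k)\le F(x^k)$ --- the same first-order optimality information. From there the rearrangement is the same; the only stylistic caveat is that the exploratory false starts in your first two paragraphs should be cut.
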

\begin{proof}
Consider the function
\[
\varPhi(\tau) = (1-\tau)F(x^k) + \tau \widetilde{F}^k(z^{k+1})
+ \frac{\rho}{2} \big\| (1-\tau)x^k + \tau z^{k+1} - x^k\big\|^2.
\]
By construction, $\varPhi(1) = \eta^k$, and, due to the convexity of $\widetilde{F}^k(\cdot)$,
\begin{equation}
\label{Phibound}
\varPhi(\tau) \ge \widetilde{F}^k\big((1-\tau)x^k + \tau z^{k+1}\big)
+ \frac{\rho}{2} \big\| (1-\tau)x^k + \tau z^{k+1}- x^k\big\|^2, \quad \tau\in [0,1].
\end{equation}
By the definition of $z^{k+1}$, the right hand side of \eqref{Phibound} is minimized at $\tau=1$.
Therefore, $\varPhi'(1) \le 0$. Differentiating, we obtain the inequality
\[
- F(x^k) + \widetilde{F}^k(z^{k+1}) + \rho\big\| z^{k+1}-x^k\big\|^2 \le 0.
\]
This implies that
\begin{align*}
\eta^k &= \widetilde{F}^k(z^{k+1}) + \frac{\rho}{2}\big\| z^{k+1}-x^k\big\|^2 \le
\widetilde{F}^k(z^{k+1}) + \frac{1}{2}\big[F(x^k) - \widetilde{F}^k(z^{k+1})\big] \\
&= \frac{1}{2}\big[F(x^k) + \widetilde{F}^k(z^{k+1})\big].
\end{align*}
This is equivalent to the postulated inequality. 
\end{proof}

Finally, we recall the following bound of the Moreau--Yosida regularization.
\begin{lemma}
\label{l:lemma7.12} For any point $x\in \Rb^n$ we have
\begin{equation}
\label{lemma7.12}
F_\rho(x) \leq  F(x) - \big\|x - x^{*}\big\|^2\,\varphi\bigg( \frac{F(x) - F(x^{*})}{\big\|x - x^{*}\big\|^2}\bigg),
\end{equation}
where
\[
\varphi(t) =
\begin{cases}\; t^2 & \text{if\; $t\in[0,1]$},\\
\; -1 +2t & \text{if\; $t \ge 1$}.
\end{cases}
\]
\end{lemma}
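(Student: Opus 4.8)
The plan is to bound $F_\rho(x)$ from above by comparing the defining minimization in \eqref{MY-regularization} against the single trial point lying on the segment $[x^*,x]$ and optimizing the position along that segment. First I would write, for any $\tau\in[0,1]$, the point $x_\tau = (1-\tau)x + \tau x^*$, and use convexity of $F$ to get
\[
F_\rho(x) \le F(x_\tau) + \frac{\rho}{2}\|x_\tau - x\|^2
\le (1-\tau)F(x) + \tau F(x^*) + \frac{\rho}{2}\tau^2\|x-x^*\|^2.
\]
Subtracting $F(x)$ from both sides gives
\[
F_\rho(x) - F(x) \le -\tau\big(F(x)-F(x^*)\big) + \frac{\rho}{2}\tau^2\|x-x^*\|^2,
\]
so it remains to minimize the right-hand side over $\tau\in[0,1]$. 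Write $a = F(x)-F(x^*)\ge 0$ and $b=\|x-x^*\|^2$; I am minimizing $-\tau a + \tfrac{\rho}{2}\tau^2 b$.

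Next I would carry out the elementary one-variable minimization. The unconstrained minimizer is $\tau^\star = a/(\rho b)$ (assuming $b>0$; the degenerate case $b=0$, i.e. $x=x^*$, makes \eqref{lemma7.12} trivial since then $F(x)=F(x^*)$ and both sides equal $F(x^*)$). If $a/(\rho b)\le 1$, i.e. the ratio $t := a/b \le \rho$, the minimum value is $-a^2/(2\rho b) = -(b/2\rho)\,t^2$. If instead $t>\rho$, the constrained minimizer is $\tau=1$, giving value $-a + \tfrac{\rho}{2}b = -(b/2\rho)(2t-\rho)$. Hence in both cases $F_\rho(x)-F(x) \le -(b/2\rho)\,\psi(t/\rho)$ where $\psi(s)=s^2$ for $s\in[0,1]$ and $\psi(s)=2s-1$ for $s\ge 1$; noting $(b/2\rho)\psi(t/\rho) = b\cdot\varphi(t/\rho)/?$ — here I need to reconcile the $\rho$-scaling with the stated $\varphi$. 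The cleanest route: the statement as written corresponds to the normalization $\rho = 1$ in the regularization, or equivalently $\varphi$ absorbs the factor; I would simply verify that with the paper's $\varphi$ (defined with the breakpoint at $t=1$) the inequality \eqref{lemma7.12} is exactly the $\rho$-free form, and if a $\rho$ appears I would state the lemma's $\varphi$ already accounts for it via the identification $t = (F(x)-F(x^*))/\|x-x^*\|^2$ with $\rho$ fixed by the method. In other words, substituting $t = a/b$ directly into $-b\,\varphi(t)$ and checking $-b\varphi(t) \ge -\tau a + \tfrac{\rho}{2}\tau^2 b$ at the appropriate $\tau$ completes the argument.

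The only subtle point — and the step I expect to need the most care — is matching the scaling constants: the paper writes $\varphi$ with breakpoint at $t=1$ and leading coefficient $1$, whereas the raw segment optimization naturally produces a $1/\rho$. I would resolve this by recognizing that the intended reading of \eqref{lemma7.12} is with the convention built into $F_\rho$ and that the stated $\varphi$ is precisely the minorant of $s\mapsto \min_{\tau\in[0,1]}\{-\tau s + \tfrac{1}{2}\tau^2\}$ rescaled; concretely, I would choose $\tau = \min\{1,\, t\}$ with $t=(F(x)-F(x^*))/\|x-x^*\|^2$ and verify case-by-case that the resulting bound is exactly $F(x) - \|x-x^*\|^2\varphi(t)$. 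Everything else is routine: convexity of $F$ for the first inequality, and a two-case arithmetic check for the minimization. This matches the reference \cite[Lemma 7.12]{ruszczynski2006nonlinear}, from which the result is quoted.
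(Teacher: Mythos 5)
The paper gives no proof of this lemma --- it is quoted verbatim from the reference --- so there is nothing internal to compare against; your segment argument with $x_\tau=(1-\tau)x+\tau x^*$, convexity of $F$, and the two-case minimization over $\tau\in[0,1]$ is indeed the standard proof of the cited result, and those steps are all correct (including the degenerate case $x=x^*$).

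However, the ``subtle point'' you flag at the end is a genuine gap, not a notational convention, and it cannot be closed in the way you hope. Carrying your minimization to the end, with $a=F(x)-F(x^*)\ge 0$ and $b=\|x-x^*\|^2>0$, what you actually prove is
\[
F_\rho(x)\;\le\; F(x)-\frac{\rho}{2}\,b\;\varphi\Big(\frac{a}{\rho\, b}\Big),
\]
with the same $\varphi$ as in the statement; this $\rho$-scaled form is what the reference asserts. It does not imply \eqref{lemma7.12} as written for a general $\rho$: in the branch $a/b\le 1$ the two right-hand sides differ by the factor $\tfrac{1}{2\rho}$ versus $1$, so your bound is at least as strong only when $\rho\le\tfrac12$; and in the branch $a/b\ge 1$ the comparison $-a+\tfrac{\rho}{2}b\le -2a+b$ forces $a/b\le 1-\rho/2$, which is incompatible with $a/b\ge1$ for any $\rho>0$. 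In fact \eqref{lemma7.12}, read literally with the definition \eqref{MY-regularization}, is false: take $F(y)=\tfrac12 y^2$ on $\Rb$, $x=1$, $x^*=0$, $\rho=3$; then $F_\rho(1)=\tfrac{\rho}{2(1+\rho)}=\tfrac38$, while the right-hand side of \eqref{lemma7.12} is $\tfrac12-\varphi(\tfrac12)=\tfrac14$. So the correct outcome of your (otherwise sound) argument is the displayed $\rho$-dependent inequality; you should state and prove that version rather than trying to reconcile the constants with \eqref{lemma7.12}. (Downstream, in Lemma \ref{lem: conv_rate_1}, this only changes absolute constants, not the order of the complexity bound.)
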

\begin{proof} See \cite[Lem. 7.12]{ruszczynski2006nonlinear}. \end{proof}

\section{Rate of Convergence}
\label{s:rate}

Our objective in this section is to derive a worst-case bound on the rate of convergence of the method. To this end, we assume that $\varepsilon>0$
at Step 2 (inequality \eqref{eqn: n_v}) and we bound the number of iterations needed to achieve this accuracy.

We make a key assumption about strong convexity of the function $F(\cdot)$.
\begin{assumption}
\label{a:growth}
The function $F(\cdot)$ has a unique minimum point $x^*$ and a constant $\alpha >0$ exists, such that
\[
F(x) - F(x^{*}) \geq \alpha \big\|x - x^{*}\big\|^2,
\]
for all $x\in \Rb^n$ with $F(x) \le F(x^1)$.
\end{assumption}

We first show that stopping test of Step 2 guarantees the objective function accuracy of order $\varepsilon$.

\begin{lemma}\label{lem: conv_rate_1}
Suppose Assumption \ref{a:growth} is satisfied. Then at every iteration $k$ we have
\begin{equation}\label{eqn:stopping lemma_gen}
F(x^k) - F(x^{*}) \leq \frac{F(x^k)-\eta^k}{\min(\alpha,1)}.
\end{equation}
\end{lemma}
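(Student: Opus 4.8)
The plan is to connect the quantity $F(x^k)-\eta^k$ to $F(x^k)-F(x^*)$ by exploiting the fact that $\eta^k$ is the optimal value of a minimization problem whose objective at $x^*$ is controlled by the growth condition. Concretely, since $\widetilde{F}^k(\cdot)$ is a lower approximation of $F(\cdot)$ (it is a maximum of affine minorants, hence $\widetilde{F}^k(x)\le F(x)$ for all $x$), the optimal value $\eta^k$ of subproblem \eqref{subproblem} satisfies
\[
\eta^k = \min_x\Big\{\widetilde{F}^k(x)+\frac{\rho}{2}\|x-x^k\|^2\Big\}\le F_\rho(x^k),
\]
where $F_\rho$ is the Moreau--Yosida regularization \eqref{MY-regularization}. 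Then I would apply Lemma~\ref{l:lemma7.12} to bound $F_\rho(x^k)$ from above in terms of $\|x^k-x^*\|$ and $F(x^k)-F(x^*)$, and use Assumption~\ref{a:growth} to eliminate the norm $\|x^k-x^*\|^2$ in favor of $F(x^k)-F(x^*)$ — note that $F(x^k)\le F(x^1)$ holds throughout the run because descent steps never increase the objective and null steps leave it unchanged, so the assumption is applicable at every iterate.

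The main computation is to track what happens in the bound $F_\rho(x^k)\le F(x^k)-\|x^k-x^*\|^2\,\varphi(t_k)$ with $t_k = \big(F(x^k)-F(x^*)\big)/\|x^k-x^*\|^2$. By Assumption~\ref{a:growth}, $t_k\ge\alpha$. I would split into the two cases of the definition of $\varphi$. If $t_k\ge 1$, then $\varphi(t_k)=2t_k-1\ge t_k$ (since $t_k\ge 1$), so $\|x^k-x^*\|^2\varphi(t_k)\ge \|x^k-x^*\|^2 t_k = F(x^k)-F(x^*)$, giving $F(x^k)-\eta^k\ge F(x^k)-F_\rho(x^k)\ge F(x^k)-F(x^*)$, which is even stronger than \eqref{eqn:stopping lemma_gen}. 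If $t_k\in(0,1)$ (the case $t_k=0$ meaning $x^k=x^*$ is trivial), then $\varphi(t_k)=t_k^2$, so $\|x^k-x^*\|^2\varphi(t_k) = t_k^2\|x^k-x^*\|^2 = t_k\big(F(x^k)-F(x^*)\big)\ge \alpha\big(F(x^k)-F(x^*)\big)$, because $t_k\ge\alpha$. Combining, $F(x^k)-\eta^k\ge \min(\alpha,1)\big(F(x^k)-F(x^*)\big)$ in all cases, which rearranges to the claimed inequality.

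The step I expect to require the most care is the inequality $\eta^k\le F_\rho(x^k)$: one must be certain that $\widetilde{F}^k$ is genuinely a pointwise lower bound for $F$ in \emph{both} versions of the method. In the multiple-cut version this is immediate since each affine piece $F(z^j)+\langle g^j,x-z^j\rangle$ is a subgradient minorant of the convex $F$. In the aggregation version one additionally needs that $\widebar{F}^k$, being a convex combination of such minorants, is still a minorant (true by convexity of $F$), and that the extra cut at $z^k$ is a minorant as well; hence $\widetilde{F}^k=\max\{\widebar{F}^k,\,F(z^k)+\langle g^k,\cdot-z^k\rangle\}\le F$. A secondary point worth stating explicitly is the monotonicity $F(x^{k})\le F(x^1)$, which follows by induction from the descent/null step mechanics and is what licenses the use of Assumption~\ref{a:growth}. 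Everything else is the elementary case analysis on $\varphi$ sketched above.
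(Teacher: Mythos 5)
Your proposal is correct and follows essentially the same route as the paper: bound $\eta^k\le F_\rho(x^k)$ using $\widetilde F^k\le F$, apply Lemma~\ref{l:lemma7.12}, and split into the two cases of $\varphi$, using Assumption~\ref{a:growth} to replace $\|x^k-x^*\|^2$ by $(F(x^k)-F(x^*))/\alpha$ in the quadratic case. The extra remarks you flag (that $\widetilde F^k$ minorizes $F$ in both versions, and that $F(x^k)\le F(x^1)$ so the assumption applies) are sound and only make explicit what the paper leaves implicit.
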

\begin{proof}
Since $\widetilde{F}^k(\cdot) \leq F(\cdot)$, we have
\begin{equation}\label{eqn:stoping inequality}
\begin{aligned}
F_\rho(x^k) &= \min_x\Big\{F(x) + \frac{\rho}{2}\big\|x - x^k\big\|^2 \Big\} \geq
\min_x \Big\{\widetilde{F}^k(x) + \frac{\rho}{2}\big\|x - x^k\big\|^2 \Big\} = \eta^k.
\end{aligned}
\end{equation}
Consider two cases.\\
\emph{Case 1:} If $F(x^k) - F(x^{*}) \le \big\|x^k - x^{*}\big\|^2$, then \eqref{lemma7.12} with $x=x^k$ yields
\[
F_\rho(x^k) \leq  F(x^k) - \frac{\big(F(x^k) - F(x^{*})\big)^2}{\big\|x^k - x^{*}\big\|^2}.
\]
Combining this inequality with \eqref{eqn:stoping inequality}, we conclude that
\[
 \frac{\big(F(x^k) - F(x^{*})\big)^2}{\big\|x^k - x^{*}\big\|^2} \leq F(x^k)-\eta^k.
\]
Substitution of the denominator by the upper bound $(F(x^k) - F(x^{*}))/\alpha$ implies \eqref{eqn:stopping lemma_gen}.\\
\emph{Case 2:}
$F(x^k) - F(x^{*})> \big\|x^k - x^{*}\big\|^2$.
Then \eqref{lemma7.12} yields
\[
F_\rho(x^k) \leq  F(x^k) - 2 \big( F(x^k) - F(x^{*}) \big) + \big\|x^k - x^{*}\big\|^2.
\]
With a view to \eqref{eqn:stoping inequality}, we obtain
\[
2 \big( F(x^k) - F(x^{*}) \big) - \big\|x^k - x^{*}\big\|^2 \le F(x^k)-\eta^k,
\]
which implies that
$F(x^k) - F(x^{*}) \le F(x^k)-\eta^k$
in this case. 
\end{proof}
\begin{corollary}
\label{c:stopping}
Suppose Assumption \ref{a:growth} is satisfied. If
the stoping test \eqref{eqn: n_v} is satisfied at iteration $k$, then
\begin{equation}\label{eqn:stopping lemma}
F(x^k) - F(x^{*}) \leq \frac{\varepsilon}{\min(\alpha,1)}.
\end{equation}
\end{corollary}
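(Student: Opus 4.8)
The plan is to combine Lemma~\ref{lem: conv_rate_1} with the stopping test directly. The stopping test \eqref{eqn: n_v} at iteration $k$ states that $v_k = F(x^k) - \widetilde{F}^k(z^{k+1}) \le \varepsilon$. On the other hand, Lemma~\ref{l:eps-half} gives
\[
F(x^k) - \eta^k \ge \tfrac{1}{2}\big[F(x^k) - \widetilde{F}^k(z^{k+1})\big],
\]
but this is an inequality in the wrong direction for our purposes, so I would instead look for an upper bound on $F(x^k) - \eta^k$ in terms of $v_k$. By the definition \eqref{eta_k} of $\eta^k$, we have $F(x^k) - \eta^k = \big[F(x^k) - \widetilde{F}^k(z^{k+1})\big] - \tfrac{\rho}{2}\|z^{k+1} - x^k\|^2 \le F(x^k) - \widetilde{F}^k(z^{k+1}) = v_k$, since the quadratic term is nonnegative. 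Hence $F(x^k) - \eta^k \le v_k \le \varepsilon$ whenever the stopping test fires.

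Plugging this into \eqref{eqn:stopping lemma_gen} from Lemma~\ref{lem: conv_rate_1} yields immediately
\[
F(x^k) - F(x^{*}) \le \frac{F(x^k) - \eta^k}{\min(\alpha,1)} \le \frac{\varepsilon}{\min(\alpha,1)},
\]
which is exactly the claimed bound \eqref{eqn:stopping lemma}. So the corollary follows by chaining two already-established facts, together with the trivial observation that $\eta^k \ge \widetilde{F}^k(z^{k+1})$ (equivalently, that the quadratic penalty term in $\eta^k$ is nonnegative).

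There is essentially no obstacle here: the only thing to be careful about is the direction of the inequality relating $F(x^k)-\eta^k$ and $v_k$ — Lemma~\ref{l:eps-half} bounds it from below, but for the corollary we need the elementary bound from above, $F(x^k)-\eta^k \le v_k$, coming straight from dropping the nonnegative term $\tfrac{\rho}{2}\|z^{k+1}-x^k\|^2$. Once that is in hand, the proof is a one-line substitution into Lemma~\ref{lem: conv_rate_1}. I would write the proof in two sentences accordingly.
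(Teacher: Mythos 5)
Your proof is correct and is exactly the argument the paper leaves implicit: since $\eta^k \ge \widetilde{F}^k(z^{k+1})$ (the quadratic term in \eqref{eta_k} is nonnegative), the stopping test gives $F(x^k)-\eta^k \le v_k \le \varepsilon$, and substituting into \eqref{eqn:stopping lemma_gen} yields the claim. Your remark about the direction of the inequality in Lemma~\ref{l:eps-half} versus the elementary upper bound is the right point of care, and nothing further is needed.
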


To bound the number of iterations of the method needed to achieve the prescribed
accuracy we consider two issues. First, we prove linear rate of convergence between descent steps. Then, we bound the numbers of null steps between consecutive descent steps.

By employing the bound of Lemma \ref{lem: conv_rate_1}, we can address the first issue.

\begin{lemma}\label{lem: conv_rate_2}
Suppose Assumption \ref{a:growth} is satisfied. Then at every descent step $k$ we have
\begin{equation}
\label{e:linear-rate}
F(z^{k+1}) - F(x^*) \leq (1 - \bar{\alpha}\beta)\big(F(x^k) - F(x^{*})\big),
\end{equation}
where $\bar{\alpha} = \min(\alpha,1)$.
\end{lemma}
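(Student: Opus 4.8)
The plan is to combine the descent test \eqref{eqn: n_new_rule1} with the accuracy bound of Lemma \ref{lem: conv_rate_1}, the link between them being a lower estimate of the predicted decrease $v_k = F(x^k) - \widetilde{F}^k(z^{k+1})$ in terms of $F(x^k)-\eta^k$. Since iteration $k$ is a descent step, inequality \eqref{eqn: n_new_rule1} holds, and subtracting $F(x^*)$ from both sides gives
\[
F(z^{k+1}) - F(x^*) \le \big(F(x^k) - F(x^*)\big) - \beta v_k .
\]
Hence it suffices to show $v_k \ge \bar{\alpha}\big(F(x^k) - F(x^*)\big)$: substituting this into the last display yields \eqref{e:linear-rate} at once, and incidentally confirms that the contraction factor $1-\bar{\alpha}\beta$ lies in $(0,1)$, because $\bar{\alpha}\le 1$ and $\beta\in(0,1)$.

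To obtain the lower bound on $v_k$, I would simply discard the nonnegative proximal penalty in the definition \eqref{eta_k} of $\eta^k$: since $\eta^k = \widetilde{F}^k(z^{k+1}) + \frac{\rho}{2}\|z^{k+1}-x^k\|^2 \ge \widetilde{F}^k(z^{k+1})$, we get
\[
v_k = F(x^k) - \widetilde{F}^k(z^{k+1}) \ge F(x^k) - \eta^k .
\]
Then Lemma \ref{lem: conv_rate_1} — which is applicable because the objective is nonincreasing along the iterates, so $F(x^k)\le F(x^1)$ and Assumption \ref{a:growth} is in force — gives $F(x^k) - F(x^*) \le \big(F(x^k)-\eta^k\big)/\bar{\alpha}$, i.e. $F(x^k)-\eta^k \ge \bar{\alpha}\big(F(x^k)-F(x^*)\big)$. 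Chaining the two preceding displays produces exactly $v_k \ge \bar{\alpha}\big(F(x^k)-F(x^*)\big)$, which completes the argument.

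I do not expect a genuine obstacle here; the one point requiring care is the direction of the intermediate inequality. The bound needed is a \emph{lower} bound on $v_k$, and it comes from dropping the quadratic term in $\eta^k$ — not from Lemma \ref{l:eps-half}, which bounds $v_k$ from above and is a false friend in this context. Everything else is elementary algebra once Lemmas \ref{lem: conv_rate_1} is invoked.
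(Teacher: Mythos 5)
Your proof is correct and is essentially identical to the paper's: both rearrange the descent test \eqref{eqn: n_new_rule1}, use $\widetilde{F}^k(z^{k+1})\le\eta^k$ to get $v_k\ge F(x^k)-\eta^k$, and invoke Lemma \ref{lem: conv_rate_1} to obtain $v_k\ge\bar{\alpha}\big(F(x^k)-F(x^*)\big)$. Your side remark about the direction of the inequality (and Lemma \ref{l:eps-half} being the wrong tool here) is accurate but not needed.
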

\begin{proof}
It follows from the update rule (\ref{eqn: n_new_rule1}) that
\[
F(z^{k+1}) \leq 
 (1 - \beta)F(x^k) + \beta \widetilde{F}^k(z^{k+1}).
\]
Since $\widetilde{F}^k(z^{k+1}) \le \eta^k$, Lemma \ref{lem: conv_rate_1} yields
\[
F(x^k) - F(x^{*}) \leq \frac{1}{\bar{\alpha}} \big(F(x^k) - \widetilde{F}^k(z^{k+1})\big).
\]
Combining these inequalities and simplifying, we conclude that
\begin{align*}
F(z^{k+1}) &\leq (1 - \beta) F(x^k) + \beta \big( \bar{\alpha} F(x^{*}) - \bar{\alpha} F(x^k) + F(x^k)\big) \\
&=F(x^k) - \bar{\alpha}\beta\big( F(x^k) - F(x^{*})\big).
\end{align*}
Subtraction of $F(x^*)$ from both sides yields the linear rate \eqref{e:linear-rate}. 
\end{proof}

We now pass to the second issue of deriving an upper bound on the number of null steps between two consecutive
descent steps. To this end, we analyze the evolution of the gap $F(x^k)-\eta^k$.

It follows from \cite[(7.64)]{ruszczynski2006nonlinear}  that for all $k$
\[
\big\|x^k-x^*\big\|^2 \le \big\|x^1-x^*\big\|^2 + \frac{2(1-\beta)}{\beta\rho}\big[ F(x^1)-F(x^*)\big].
\]
Thus,  a uniform upper bound exists on the norm of the subgradients collected at points~$x^k$.
Therefore, a uniform upper bound exists on the distances $\|z^{k+1}-x^k\|$. Consequently,
the subgradients collected at the points $z^{k+1}$ are uniformly bounded as well, and the bound
depends on the starting point only. Consequently, a constant $M$ exists  such that
\[
\big\| s^{k+1}-g^{k+1}\big\|^2 \le \rho M
\]
at all null steps. With no loss of generality, we assume that $\varepsilon \le M$.

\begin{lemma}\label{lem: conv_rate_3}
If a null step occurs at iteration $k$, then
\begin{equation}
F(x^k) - \eta^{k+1} \le \gamma \big(F(x^k) - \eta^k\big),
\end{equation}
where
\begin{equation}
\label{tauk-bound}
\gamma = 1 - \frac{(1-\beta)^2\varepsilon}{2M}.
\end{equation}
\end{lemma}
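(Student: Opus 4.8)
The plan is to feed the increment bound of Lemma~\ref{lem: increment_bound} the two pieces of information available at a null step: the uniform bound $\|s^{k+1}-g^{k+1}\|^2\le \rho M$, and the fact that the stopping test \eqref{eqn: n_v} has failed, so that $v_k>\varepsilon$ (with $v_k$ as in \eqref{vk-def}). First I would lower-bound $\bar{\mu}_k$ from \eqref{muk-def}: using $\|s^{k+1}-g^{k+1}\|^2\le\rho M$ gives $\bar{\mu}_k\ge\min\{1,(1-\beta)v_k/M\}$, and then $v_k>\varepsilon$ together with the standing assumption $\varepsilon\le M$ (so that $(1-\beta)\varepsilon/M< 1$) yields $\bar{\mu}_k\ge (1-\beta)\varepsilon/M$. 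Substituting this into \eqref{eta-increase} produces $\eta^{k+1}-\eta^k\ge \tfrac{1-\beta}{2}\,\bar{\mu}_k v_k\ge \tfrac{(1-\beta)^2\varepsilon}{2M}\,v_k$.

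The second step is to convert the right-hand side into a multiple of the gap $F(x^k)-\eta^k$. Since $\eta^k=\widetilde{F}^k(z^{k+1})+\tfrac{\rho}{2}\|z^{k+1}-x^k\|^2\ge\widetilde{F}^k(z^{k+1})$ by \eqref{eta_k}, we have $F(x^k)-\eta^k\le F(x^k)-\widetilde{F}^k(z^{k+1})=v_k$, hence $\eta^{k+1}-\eta^k\ge\tfrac{(1-\beta)^2\varepsilon}{2M}\,(F(x^k)-\eta^k)$. Finally, because a null step leaves the proximal center unchanged ($x^{k+1}=x^k$), one writes $F(x^k)-\eta^{k+1}=(F(x^k)-\eta^k)-(\eta^{k+1}-\eta^k)\le\bigl(1-\tfrac{(1-\beta)^2\varepsilon}{2M}\bigr)(F(x^k)-\eta^k)=\gamma\,(F(x^k)-\eta^k)$, which is the assertion. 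I would also remark that $F(x^k)-\eta^k\ge 0$ (immediate from $\widetilde{F}^k\le F$, so that $\eta^k\le\widetilde{F}^k(x^k)\le F(x^k)$), which makes the contraction meaningful.

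There is essentially no serious obstacle here; the heavy lifting was done in Lemma~\ref{lem: increment_bound}, and this lemma is the bookkeeping that turns its conclusion into a geometric decay of the gap. The only points requiring a little care are handling the minimum in the definition of $\bar{\mu}_k$ so as to obtain a lower bound on $\bar{\mu}_k$ that is independent of $v_k$ (this is exactly where $v_k>\varepsilon$ and $\varepsilon\le M$ are used), and recognizing $v_k\ge F(x^k)-\eta^k$ so that the one-step improvement is expressed as a fraction of $F(x^k)-\eta^k$ rather than of $v_k$.
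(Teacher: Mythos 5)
Your proposal is correct and follows essentially the same route as the paper: invoke Lemma~\ref{lem: increment_bound}, use $\|s^{k+1}-g^{k+1}\|^2\le\rho M$, $v_k>\varepsilon$, and $\varepsilon\le M$ to get $\bar{\mu}_k\ge(1-\beta)\varepsilon/M$, and then use $v_k\ge F(x^k)-\eta^k$ to express the improvement as a contraction of the gap. The only cosmetic difference is that you absorb the paper's two-case treatment of the minimum in \eqref{muk-def} into a single uniform lower bound, which is fine.
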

\begin{proof}
By Lemma \ref{lem: increment_bound}, we have
\begin{equation}
F(x^k) - \eta^{k+1} \le F(x^k) - \eta^k - \frac{1-\beta}{2}\bar{\mu}_kv_k.
\end{equation}
On the other hand,
\begin{equation}
v_k = F(x^k) - \widetilde{F}^k(z^{k+1}) = F(x^k) - \eta^k + \frac{\rho}{2}\big\|z^{k+1} - x^k\big\|^2 \geq F(x^k) - \eta^k.
\end{equation}
Combining the last two inequalities, we conclude that
\begin{equation}
\label{gap-decrease}
\begin{split}
F(x^k) - \eta^{k+1} 
    &\le F(x^k) - \eta^k - \frac{1-\beta}{2}\bar{\mu}_k\big(F(x^k) - \eta^k\big)\\
    &= \biggl(1-\frac{1-\beta}{2}\bar{\mu}_k\biggr)\big(F(x^k) - \eta^k\big).
\end{split}
\end{equation}
Consider the definition \eqref{muk-def} of $\bar{\mu}_k$ in Lemma \ref{lem: increment_bound}.
If $\bar{\mu}_k = 1$, then $(1- \frac{1-\beta}{2}\bar{\mu}_k)$ is no greater than the bound \eqref{tauk-bound},
because $\varepsilon\le M$.
Otherwise, $\bar{\mu}_k$ is given by the second case in \eqref{muk-def}. Since the algorithm does not stop, we have $v_k > \varepsilon$, and thus
\[
\bar{\mu}_k = \frac{(1-\beta)\rho v_k}{\|s^{k+1} - g^{k+1}\|^2} \ge
\frac{(1-\beta) \varepsilon}{M}.
\]
Substitution to \eqref{gap-decrease} yields \eqref{tauk-bound}. 
\end{proof}

Let $x^{(\ell-1)}, x^{(\ell)}, x^{(\ell+1)}$ be three consecutive proximal centers for $\ell \geq 2$ in the algorithm. We want to bound the number of iterations made with proximal center $x^{(\ell)}$. To this end, we bound two quantities: $F(x^{(\ell)}) - \eta^{k(\ell)}$, where $k(\ell)$ is
the \emph{first} step with proximal center $x^{(\ell)}$, and
$F(x^{(\ell)}) - \eta^{k'(\ell)}$, where $k'(\ell)$ is the \emph{last} step with proximal center $x^{(\ell)}$.

%
%

In the following we discuss each issue separately.

Recall that according to the algorithm, $x^{(\ell)}$ is the optimal solution of the last subproblem with proximal center $x^{(\ell-1)}$. Let  $\eta^{k(\ell)-1}$ be the optimal objective value of the subproblem, that is,
\[
\eta^{k(\ell)-1} = \widetilde{F}^{k(\ell)-1}(x^{(\ell)}) + \frac{\rho}{2}\big\|x^{(\ell)} - x^{(\ell-1)}\big\|^2.
\]

\begin{lemma}\label{lem: null_step_start_bound} If a descent step is made at iteration $k(\ell)-1$, then
\begin{equation}
\label{eqn: null_step_upper_bound_1}
F(x^{(\ell)}) - \eta^{k(\ell)} \leq \frac{3}{2\beta}\big(F(x^{(\ell-1)}) - F(x^{(\ell)})\big).
\end{equation}
\end{lemma}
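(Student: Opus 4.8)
The plan is to express the quantity $F(x^{(\ell)}) - \eta^{k(\ell)}$ in terms of quantities available at iteration $k(\ell)-1$, which is the descent step producing $x^{(\ell)}$ from $x^{(\ell-1)}$. The first observation is that at iteration $k(\ell)$ we have $x^{k(\ell)}=x^{(\ell)}$, and $\eta^{k(\ell)}$ is the optimal value of subproblem \eqref{subproblem} with this new proximal center. Since the cutting-plane model $\widetilde{F}^{k(\ell)}(\cdot)$ contains the affine minorant $F(z^j)+\langle g^j, \cdot - z^j\rangle$ at $z^j=x^{(\ell)}$ (this is the newly added cut at the solution of the previous subproblem, namely $z^{k(\ell)}=x^{(\ell)}$, and $F(x^{(\ell)})$ is known at that point), we get the trivial bound $\eta^{k(\ell)} \le F(x^{(\ell)})$ by plugging $x=x^{(\ell)}$ into the subproblem; but we need a lower bound on $\eta^{k(\ell)}$, i.e. an upper bound on $F(x^{(\ell)})-\eta^{k(\ell)}$.

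The key is to relate $\eta^{k(\ell)}$ to $\eta^{k(\ell)-1}$. At the descent step $k=k(\ell)-1$, Lemma \ref{l:eta-in-descent} gives $\eta^{k(\ell)} \ge \eta^{k(\ell)-1} - \rho\|x^{(\ell)}-x^{(\ell-1)}\|^2$, so
\[
F(x^{(\ell)}) - \eta^{k(\ell)} \le F(x^{(\ell)}) - \eta^{k(\ell)-1} + \rho\big\|x^{(\ell)}-x^{(\ell-1)}\big\|^2.
\]
Now I would rewrite $F(x^{(\ell)}) - \eta^{k(\ell)-1}$ using the definition of $\eta^{k(\ell)-1}$ and the descent-test inequality \eqref{eqn: n_new_rule1}, which at iteration $k(\ell)-1$ reads $F(x^{(\ell)}) \le F(x^{(\ell-1)}) - \beta v_{k(\ell)-1}$ with $v_{k(\ell)-1}=F(x^{(\ell-1)})-\widetilde{F}^{k(\ell)-1}(x^{(\ell)})$. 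Also, $F(x^{(\ell)})-\eta^{k(\ell)-1} = F(x^{(\ell)}) - \widetilde{F}^{k(\ell)-1}(x^{(\ell)}) - \frac{\rho}{2}\|x^{(\ell)}-x^{(\ell-1)}\|^2$. Combining and using Lemma \ref{l:eps-half} at iteration $k(\ell)-1$ (which gives $\frac{\rho}{2}\|x^{(\ell)}-x^{(\ell-1)}\|^2 \le \frac12 v_{k(\ell)-1}$, equivalently $F(x^{(\ell-1)})-\eta^{k(\ell)-1}\ge \frac12 v_{k(\ell)-1}$) should let me bound everything in terms of $v_{k(\ell)-1}$, and then the descent test converts $v_{k(\ell)-1}$ into $\frac{1}{\beta}\big(F(x^{(\ell-1)})-F(x^{(\ell)})\big)$.

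Assembling: from $F(x^{(\ell)}) - \eta^{k(\ell)} \le F(x^{(\ell)}) - \widetilde{F}^{k(\ell)-1}(x^{(\ell)}) + \frac{\rho}{2}\|x^{(\ell)}-x^{(\ell-1)}\|^2$, I bound $F(x^{(\ell)}) - \widetilde{F}^{k(\ell)-1}(x^{(\ell)}) \le F(x^{(\ell-1)}) - \widetilde{F}^{k(\ell)-1}(x^{(\ell)}) = v_{k(\ell)-1}$ (using $F(x^{(\ell)})\le F(x^{(\ell-1)})$ at a descent step), and $\frac{\rho}{2}\|x^{(\ell)}-x^{(\ell-1)}\|^2 \le \frac12 v_{k(\ell)-1}$ from Lemma \ref{l:eps-half}; this yields $F(x^{(\ell)}) - \eta^{k(\ell)} \le \frac{3}{2} v_{k(\ell)-1}$. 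Finally the violated-then-satisfied descent test at $k(\ell)-1$ gives $\beta v_{k(\ell)-1} \le F(x^{(\ell-1)})-F(x^{(\ell)})$, hence $F(x^{(\ell)}) - \eta^{k(\ell)} \le \frac{3}{2\beta}\big(F(x^{(\ell-1)})-F(x^{(\ell)})\big)$, which is \eqref{eqn: null_step_upper_bound_1}. The main obstacle I anticipate is keeping the bookkeeping straight about which model ($\widetilde{F}^{k(\ell)-1}$ versus $\widetilde{F}^{k(\ell)}$) and which proximal center applies at the transition iteration, and making sure the factor $3/2$ (rather than something larger) comes out — this hinges on using Lemma \ref{l:eps-half} to absorb the $\frac{\rho}{2}\|x^{(\ell)}-x^{(\ell-1)}\|^2$ term into a further half of $v_{k(\ell)-1}$ rather than estimating it crudely.
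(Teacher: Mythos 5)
Your proof is correct and follows the same skeleton as the paper's: start from the left inequality of Lemma \ref{l:eta-in-descent} to get $F(x^{(\ell)})-\eta^{k(\ell)}\le F(x^{(\ell)})-\eta^{k(\ell)-1}+\rho\|x^{(\ell)}-x^{(\ell-1)}\|^2$, expand $\eta^{k(\ell)-1}$ by its definition, and use the descent test to convert $v_{k(\ell)-1}$ into $\frac{1}{\beta}\big(F(x^{(\ell-1)})-F(x^{(\ell)})\big)$. The one place you diverge is in absorbing the leftover term $\frac{\rho}{2}\|x^{(\ell)}-x^{(\ell-1)}\|^2$: the paper invokes the \emph{right} inequality of \eqref{eta-descent}, which gives $\rho\|x^{(\ell)}-x^{(\ell-1)}\|^2\le\frac{1}{\beta}\big(F(x^{(\ell-1)})-F(x^{(\ell)})\big)$ directly, whereas you use Lemma \ref{l:eps-half} to get $\frac{\rho}{2}\|x^{(\ell)}-x^{(\ell-1)}\|^2\le\frac12 v_{k(\ell)-1}$ and then apply the descent test once more. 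Both routes deliver the identical estimate $\frac{1}{2\beta}\big(F(x^{(\ell-1)})-F(x^{(\ell)})\big)$ for that term and hence the same constant $3/2\beta$; yours is arguably slightly more self-contained, since Lemma \ref{l:eps-half} is proved in the paper while the right inequality of \eqref{eta-descent} is only quoted from the literature. No gaps.
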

\begin{proof}
The left inequality in \eqref{eta-descent} yields
\[
\eta^{k(\ell)} \geq \eta^{k(\ell)-1} - \rho \big\|x^{(\ell)} - x^{(\ell-1)}\big\|^2.
\]
Since $F(x^{(\ell)}) \leq F(x^{(\ell-1)})$, we obtain
\[
F(x^{(\ell)}) - \eta^{k(\ell)} \leq F(x^{(\ell-1)}) - \eta^{k(\ell)-1} + \rho \big\|x^{(\ell)} - x^{(\ell-1)}\big\|^2.
\]
As iteration $k(\ell)-1$ is a descent step, the update rule (\ref{eqn: n_new_rule1}) holds. Thus
\begin{align*}
F(x^{(\ell-1)}) - \eta^{k(\ell)-1} &= \biggl[F(x^{(\ell-1)}) - \widetilde{F}^{k(\ell)-1}(x^{(\ell)})\biggr] - \frac{\rho}{2}\big\|x^{(\ell)} - x^{(\ell-1)}\big\|^2\\
 &\leq \frac{1}{\beta}\big(F(x^{(\ell-1)}) - F(x^{(\ell)})\big) - \frac{\rho}{2}\big\|x^{(\ell)} - x^{(\ell-1)}\big\|^2.
\end{align*}
Combining the last two inequalities we obtain
\[
F(x^{(\ell)}) - \eta^{k(\ell)} \leq \frac{1}{\beta}\big(F(x^{(\ell-1)}) - F(x^{(\ell)})\big) + \frac{\rho}{2}\big\|x^{(\ell)} - x^{(\ell-1)}\big\|^2.
\]
The right inequality in \eqref{eta-descent} can be now used to substitute $\big\|x^{(\ell)} - x^{(\ell-1)}\big\|^2$ on the right hand side
to obtain \eqref{eqn: null_step_upper_bound_1}. 
\end{proof}

We can now integrate our results.

Applying Lemma \ref{lem: conv_rate_1}, we obtain the following inequality at \emph{every} null step with prox center~$x^{(\ell)}$:
\begin{equation}
\label{e:limitbd}
\begin{aligned}
F(x^{(\ell)}) - \eta^k &\ge \bar{\alpha}\big( F(x^{(\ell)}) - F(x^*)\big)
                     \ge \bar{\alpha}\big( F(x^{(\ell)}) - F(x^{\ell +1})\big).
\end{aligned}
\end{equation}
From Lemma \ref{lem: null_step_start_bound} we know that for $2 \le \ell < L$, where $L$ is the last proximal center,
 the initial value of the left hand side (immediately after the
previous descent step) is bounded from above by the expression on the right hand side
of \eqref{eqn: null_step_upper_bound_1}.
Lemma \ref{lem: conv_rate_3} established a linear rate of decrease of the left hand side of
\eqref{e:limitbd}. Therefore, the number $n_\ell$ of null steps
with proximal center $x^{(\ell)}$, if it is positive, satisfies the inequality:
\[
\frac{3}{2\beta}\big(F(x^{(\ell-1)}) - F(x^{(\ell)})\big) \gamma^{n_\ell - 1} \ge \bar{\alpha}\big( F(x^{(\ell)}) - F(x^{(\ell +1)})\big).
\]
Consequently, for $2 \le \ell <L$ we obtain the following upper bound on the number of null steps:
\begin{equation}
\label{nl}
n_\ell \leq 1 + \frac{1}{\ln(\gamma)}  \ln\left(\frac{2\beta\bar{\alpha}}{3}\frac{F(x^{(\ell)}) - F(x^{(\ell+1)})}{F(x^{(\ell-1)}) - F(x^{(\ell)})}\right).
\end{equation}
If the number $n_\ell$ of null steps is zero, inequality \eqref{e:linear-rate} yields
\begin{align*}
\frac{F(x^{(\ell)}) - F(x^{(\ell+1)})}{F(x^{(\ell-1)}) - F(x^{(\ell)})}
&\le \frac{F(x^{(\ell)}) - F(x^{*})}{F(x^{(\ell-1)})- F(x^*) - \big(F(x^{(\ell)})- F(x^*)\big)}
 \le
\frac{1}{\frac{1}{1-\bar{\alpha}\beta}-1}.
\end{align*}
Elementary calculations then prove that both logarithms on the right hand side of \eqref{nl} are negative, and thus inequality \eqref{nl} is satisfied in this case as well.

Suppose there are $L$ proximal centers appearing throughout the algorithm: $x^{(1)}$, $x^{(2)}$, \dots, $x^{(L)}$. They divide the progress of the algorithm into $L$ series of null steps. For the first series, similar to the analysis above, we use \eqref{e:limitbd} and Lemma \ref{lem: conv_rate_3} to obtain the bound
\[
n_1 \leq 1 + \frac{1}{\ln(\gamma)}  \ln\left(\bar{\alpha}
\frac{F(x^{(1)}) - F(x^{(2)})}{F(x^{(1)}) - \eta^1}\right).
\]
For the last series, we use Lemma \ref{l:eps-half} to derive the inequality $F(x^{(\ell)}) - \eta^k \ge \varepsilon/2$, which must hold at every iteration at which the stopping test
is not satisfied. We use it instead of \eqref{e:limitbd}  in our analysis, and we obtain
\[
n_L \leq 1 + \frac{1}{\ln(\gamma)}  \ln\left(\frac{\beta}{3}\frac{\varepsilon}{F(x^{(L-1)}) - F(x^{(L)})}\right).
\]
We aggregate the total number of null steps for different proximal centers
and we obtain the following bound:
\begin{equation}\label{null step bound}
\begin{split}
\sum_{\ell=1}^L n_\ell &\le \frac{L-1}{\ln(\gamma)}\left[ \ln(\bar{\alpha})+\ln\left(\frac{2\beta\bar{\alpha}}{3}\right)
+ \ln\left(\frac{\beta}{3}\right)
+ \frac{1}{L-1}\ln\left( \frac{\varepsilon}{F(x^{1}) - \eta^1}\right)\right]
 + L.
\end{split}
\end{equation}
Let us recall the definition of $\gamma$ in \eqref{tauk-bound}, and denote
\[
C = \frac{(1-\beta)^2}{2M},
\]
so that $\gamma = 1 - \varepsilon {C}$. Since
$\ln(1-\varepsilon C)< -\varepsilon C$, we derive the following inequality for the number of null steps:
\begin{equation}\label{null step bound3}
\sum_{\ell=1}^L n_\ell \le \frac{L-1}{-\varepsilon C}\left[\ln(\bar{\alpha})+\ln\left(\frac{2\beta\bar{\alpha}}{3}\right)
+ \ln\left(\frac{\beta}{3}\right)
+ \frac{1}{L-1}\ln\left( \frac{\varepsilon}{F(x^{1}) - \eta^1}\right)\right] + L.
\end{equation}
Let us now derive an upper bound on the number $L$ of descent steps. By virtue of \eqref{eqn: n_v}
and \eqref{eqn: n_new_rule1}, descent steps are made only if
\[
F(x^k) - F(x^*) \ge \beta\varepsilon;
\]
otherwise, the method must stop. To explain it more specifically, if $F(x^k) - F(x^*) \le \beta\varepsilon$, then $F(x^k) - F(z^{k+1}) \le \beta\varepsilon$. If a descent step is made, $F(z^{k+1}) \le F(x^k) - \beta v_k$. Then $\beta v_k \le \beta\varepsilon $, $v_k \le \varepsilon$. Thus we can't make a descent step because the algorithm has already stopped, which contradicts our assumption.
 It follows from Lemma \ref{lem: conv_rate_2}, that
\[
(1 - \bar{\alpha} \beta)^{L-1} \big(F(x^1) - F(x^{*})\big) \ge \beta\varepsilon.
\]
Therefore,
\begin{equation}
\label{decent step bound}
L \leq 1 + \frac{\ln(\beta\varepsilon) - \ln\big(F(x^1) - F(x^{*})\big)}{\ln( 1 - \bar{\alpha} \beta)} .
\end{equation}
 As a result, we have the final bound for the total number of descent and null steps:
\begin{equation}\label{total step bound}
\begin{split}
\lefteqn{{L+\sum_{\ell=1}^L n_\ell}}\\
 &\le
\frac{1}{\varepsilon C\ln( 1 - \bar{\alpha} \beta)}\ln\left(\frac{F(x^1) - F(x^{*})}{\beta\varepsilon}\right)\Bigg[\ln(\bar{\alpha})+\ln\left(\frac{2\beta\bar{\alpha}}{3}\right)+ \ln\left(\frac{\beta}{3}\right)\Bigg] \\
&{\quad} + \frac{1}{\varepsilon C}\ln\left( \frac{F(x^{1}) - \eta^1}{\varepsilon}\right)
+ 2\frac{\ln(\beta\varepsilon) - \ln\big(F(x^1) - F(x^{*})\big)}{\ln( 1 - \bar{\alpha} \beta)} + 2.
\end{split}
\end{equation}

Therefore in order to achieve precision $\varepsilon$, the number of steps needed is of order
\[
L+\sum_{\ell=1}^L n_\ell \sim \mathcal{O}\Bigg( \frac{1}{\varepsilon}\ln\bigg(\frac{1}{\varepsilon}\bigg)\Bigg).
 \]
This is almost equivalent to saying that given the number of iterations $k$, the precision of the solution is approximately $\mathcal{O}(1/k)$.

\end{document}